\documentclass{article}
\usepackage{amsmath}
\usepackage{amsthm}
\usepackage{amssymb}
\usepackage{amsfonts}

\usepackage[english]{babel}
\usepackage{wrapfig}
\usepackage{graphicx}
\usepackage{geometry}
\usepackage[rightcaption]{sidecap}
\usepackage{tikz}
\usepackage{bbm}
\usepackage{url}
\usepackage{marvosym}
\usepackage{wasysym}
\usepackage{pifont}
\usepackage{mathrsfs}
\usepackage{caption}
\DeclareMathAlphabet{\mathpzc}{OT1}{pzc}{m}{it}

\allowdisplaybreaks[2]
\author{Jingchen Hu}

\newtheorem{theorem}{Theorem}[section]

\newtheorem{lemma}[theorem]{Lemma}

\numberwithin{equation}{section}
\newtheorem*{oldtheorem}{Theorem}
\newtheorem{condition}{Condition}
\newcommand{\ER}{\mathbb{R}}

\newcommand{\EC}{\mathbb{C}}

\newcommand{\ijbar}{{i\overline j}}

\newcommand{\partialijbar}{\partial_{i\overline j}}
\newcommand{\bfs}{{\bf s}}

\newcommand{\jbar}{{\overline{j}}}

\newcommand{\tbar}{{\overline{t}}}

\newcommand{\Abar}{{\overline{A}}}
\newcommand{\Bbar}{{\overline{B}}}

\newcommand{\Abarinverse}{\overline{A^{-1}}}

\newcommand{\Bjbar}{B_{\jbar}}

\newcommand{\Ainverse}{{A^{-1}}}

\newcommand{\BBi}{{\mathscr{B}^{(i)}}}
\newcommand{\BBj}{{\mathscr{B}^{(j)}}}

\newcommand{\BBjbar}{{\left(\BBj\right)^\ast}}

\newcommand{\qbar}{{\overline{q}}}

\newcommand{\partiali}{\partial_i}

\newcommand{\partialjbar}{\partial_{\overline j}}

\newcommand{\uijbar}{{u^{\ijbar}}}

\title{Convexity of the Potential Function of the Einstein-K\"ahler Metric on a Convex Domain}

\author{Jingchen Hu, Li Sheng}
\begin{document}
\maketitle

\begin{abstract}
Suppose that $u$ is the potential function of a complete K\"ahler-Einstein metric on a bounded strictly convex domain in $\EC^n$. We prove that $u$ itself is strictly convex.
\end{abstract}

\section{Introduction}
Suppose that $u$ is the potential function of a complete K\"ahler-Einstein metric on a strictly pseudo-convex domain in $\EC^n$. Then $u$ satisfies
\begin{align}
	\label{eq:potential_KE}
	\left\{
 	\begin{array}{cc}
 	\det(u_{i\jbar})=e^{(n+1)u}& \text{in\ \ } \Omega;\\
 	u(x)\rightarrow+\infty & \text{as\ \ }x\rightarrow \partial\Omega.
 \end{array}
	\right.
\end{align}
Let $v=e^{-u}$, then $v$ satisfies
\begin{align}
	\label{eq:potential_KE_exp}
	\left\{
	\begin{array}{cc}
		\det
		\left(
		\begin{array}{cc}
			v& v_{\jbar}\\
			v_i& v_{i\jbar}
			\end{array}
		\right)=(-1)^n, & \text{in\ \ }\Omega;\\
		v=0,& \text{on \ \ }\partial\Omega.
	\end{array}
	\right.
\end{align}
By studying the equations above, Cheng and Yau\cite{ChengYau_80_KahlerMetric} proved the following result:
\begin{oldtheorem}[Corollary 6.6 of \cite{ChengYau_80_KahlerMetric}]
	Suppose that $\Omega$ is a $C^k$, $k\geq \max(3n+6,2n+9)$, strictly pseudoconvex domain in $\EC^n$. Then there is a function $v\in C^{\omega}(\Omega)\cap C^{n+3/2-\delta}(\overline{\Omega})$, for any $\delta>0$, solving (\ref{eq:potential_KE_exp}).
\end{oldtheorem}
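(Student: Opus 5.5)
The plan follows Cheng--Yau: existence via a continuity method plus a priori estimates, then boundary regularity from a weighted analysis near $\partial\Omega$. I would work with (\ref{eq:potential_KE}) in the interior and with the $v$-formulation (\ref{eq:potential_KE_exp}) near the boundary.

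\emph{Existence.} Let $\rho$ be a strictly plurisubharmonic defining function, $\rho<0$ in $\Omega$. Take $u_0=-\log(-\rho)$. Since $-\log(-\rho)$ is strictly plurisubharmonic near $\partial\Omega$, after modifying $\rho$ (for instance replacing $-\rho$ by $-\rho e^{-K|z|^2}$) $u_0$ defines a complete K\"ahler metric of bounded geometry with bounded curvature. Then $\det(u_{0,i\jbar})=e^{(n+1)u_0+F}$, where $F$ is bounded together with its derivatives in the metric $\partial\bar\partial u_0$. This uses the $C^k$ regularity of $\rho$, and Fefferman's computation shows that $F$ is in fact $O(\rho)$-small for a good choice of $\rho$. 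Solve $\det(u_{0,i\jbar}+\varphi_{i\jbar})=e^{(n+1)(u_0+\varphi)+F}$ for bounded $\varphi$. The $C^0$ bound comes from Yau's generalized maximum principle on complete manifolds with Ricci curvature bounded below. The sign of the $e^{(n+1)\varphi}$ term gives a favorable monotone structure. The second-order bound comes from the Aubin--Yau $\Delta\varphi$ estimate, the third-order bound from Calabi's estimate, and the continuity method closes in the weighted H\"older spaces of quasi-coordinates. Interior analyticity of $v=e^{-u}$ then follows from elliptic regularity of the real-analytic Monge--Amp\`ere equation.

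\emph{Boundary regularity.} Since $\varphi$ is bounded, $v=(-\rho)e^{-\varphi}$ is comparable to $-\rho$, so $v$ is Lipschitz and vanishes on $\partial\Omega$. To improve this I would show that derivatives of $\varphi$ in the bounded-geometry quasi-coordinates decay like powers of $-\rho$. The key step is to linearize at the solution. The linearized operator $\Delta_g-(n+1)$ for the complete metric $g$ is asymptotically the hyperbolic Laplacian on the ball, so its indicial roots govern the expansion. One then constructs barriers $\pm C(-\rho)^{s}$ and iterates, with the forcing $F=O(\rho)$, to get $\varphi=O((-\rho)^{s})$ with matching derivative bounds for every $s<n+1$. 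The first indicial obstruction is a $(-\rho)^{n+1}\log(-\rho)$ term, which is why only $C^{n+3/2-\delta}$ and not smoothness is expected.

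Translating these weighted bounds into Euclidean H\"older norms gives $v\in C^{n+3/2-\delta}(\overline\Omega)$. The translation uses that quasi-coordinate balls have Euclidean size $\sim(-\rho)$ normally and $\sim(-\rho)^{1/2}$ tangentially, which explains the half-integer loss. The count $k\ge\max(3n+6,2n+9)$ is what is needed so that $F$ and the barriers have enough derivatives. I expect the main obstacle to be this boundary step: controlling the nonlinear error terms in the iteration, and handling the anisotropic scaling of the quasi-coordinates when converting to Euclidean H\"older regularity.
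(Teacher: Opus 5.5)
The paper gives no proof of this statement; it quotes it from Cheng--Yau, so the benchmark is their argument. Your existence part matches that argument in outline and is fine: complete metric of bounded geometry in quasi-coordinates, continuity method, $C^0$ bound from the generalized maximum principle, Aubin--Yau and Calabi estimates, interior analyticity. The genuine gap is in the boundary step.

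You take the reference potential $u_0=-\log(-\rho)$, with forcing $F$ that is only $O(-\rho)$. No barrier argument can then give $\varphi=O((-\rho)^s)$ for any $s>1$, because the linearized equation makes $\varphi$ inherit the order of $F$. Concretely, $(\Delta_g-(n+1))(-\rho)^s\approx(s-n-1)(s+1)(-\rho)^s$. A forcing $F\approx F_1\cdot(-\rho)$ with $F_1\neq0$ on $\partial\Omega$ therefore forces $\varphi$ to contain the term $\tfrac{F_1}{2n}(-\rho)$, up to sign. "Iterating barriers" does not remove it. As written, your weighted estimates give only $\varphi=O(-\rho)$, hence only about $C^{1,1}$ for $v$.

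The missing ingredient is Fefferman's approximate solution, which is what Cheng--Yau compare with. This is a defining function $\psi$ with $J(\psi)=1+O(\psi^{n+1})$, where $J(\psi)$ is $(-1)^n$ times the bordered determinant in the $v$-equation. It is built by successively multiplying by factors $1+c_j\psi^j$, $0\le j\le n$. Each step is solvable because $(j+1)(n+1-j)\neq0$, and each costs two boundary derivatives. That derivative loss is the natural source of the $n$-dependence in $k\ge\max(3n+6,2n+9)$; barriers alone would not need it. Only with $u_0=-\log(-\psi)$ is $F=\log J(\psi)=O(\rho^{n+1})$, and then your barriers $(-\rho)^s$, $s<n+1$, do work. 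You still have to show this decay of $F$ in the quasi-coordinate $C^{k,\alpha}$ norms, not merely pointwise. Only then do Schauder estimates on quasi-coordinate balls upgrade the barrier bound to weighted derivative bounds on $\varphi$.

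Your account of the exponent also does not hold together. A term $(-\rho)^{n+1}\log(-\rho)$ in $u$ becomes $(-\rho)^{n+2}\log(-\rho)$ in $v=e^{-u}$, which obstructs only at order $n+2$. Moreover, the anisotropic scaling costs at most $(-\rho)^{-1}$ per Euclidean derivative, and complex-tangential derivatives cost only $(-\rho)^{-1/2}$. So the decay you assert would give $v\in C^{n+2-\delta}$, the sharp Lee--Melrose regularity, not a half-integer loss. That would not hurt the (weaker) statement. But it shows the $n+3/2$ is not explained by your sketch; Cheng--Yau's exponent is simply not sharp.
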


It's interesting to observe that the regularity of $v$ is less than $C^{n+3/2}$, regardless of $k$, which has been observed by Fefferman in an earlier work \cite{Fefferman1976Ann}. Fefferman suggested that logarithmic terms exist following the $(n+1)$-th order term in the formal asymptotic expansion. This is proved by Lee and Melrose, in \cite{Lee_Melrose_Acta}. Suppose that $\Omega$ is smooth and $\varphi$ is a smooth defining function of $\Omega$. Lee and Melrose proved that
\begin{align}
	u+\log(-\varphi)-\sum_{j=0}^N a_j \varphi^{(n+1)j}(\log(-\varphi))^j\in C^{(n+1)N-1}(\overline{\Omega}),
\end{align}where ${a_j}'s$ are smooth functions in $\overline\Omega$.
The convergence of the above asymptotic expansion is proved by \cite{Han_Jiang_Expansion}, using a method developed in \cite{HanJiang_MinimalSurface}, where a similar boundary regularity obstacle was studied for the minimal surface problem in hyperbolic spaces.

In this paper, by extending the computation of \cite{Hu22Nov} and \cite{HuCConvexity} to non-degenerate complex Monge-Amp\`ere equations, we prove the following result:
\begin{theorem}
	Suppose that $\Omega$ is a $C^k$, $k\geq \max(3n+6,2n+9)$, bounded strictly convex domain in $\EC^n$. Then the solution $u$ of  (\ref{eq:potential_KE}) is a strictly convex function.
\end{theorem}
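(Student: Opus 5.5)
The plan is a deformation (continuity) argument combined with a constant rank theorem for the real Hessian $D^2u$, viewing $\Omega\subset\EC^n=\ER^{2n}$.

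\emph{Step 1: strict convexity near $\partial\Omega$.} Since $\Omega$ is strictly convex, its defining function $\varphi$ can be taken strictly convex near $\partial\Omega$. The Cheng--Yau regularity $v=e^{-u}\in C^{n+3/2-\delta}(\overline\Omega)$ gives $u=-\log(-\varphi)+O(1)$ in $C^2$ up to lower-order terms. Then
\[
D^2u=\frac{D\varphi\otimes D\varphi}{\varphi^2}-\frac{D^2\varphi}{\varphi}+\text{lower order},
\]
which is positive definite in a collar $\{-\varphi<\epsilon\}$.

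\emph{Step 2: deformation.} Join $\Omega$ to a ball through a continuous family $\Omega_t$, $t\in[0,1]$, of smooth bounded strictly convex domains, for instance via Minkowski combinations $(1-t)B+t\Omega$ after smoothing. On the ball the solution is explicit: $u=-\tfrac{n+1}{n+1}\log(1-|z|^2)$ up to an additive constant, i.e. $-\log(1-|z|^2)$ suitably normalized, and this is strictly convex. By uniqueness and the stability of the Cheng--Yau solutions, $u_t$ depends continuously on $t$ in $C^2_{loc}$, and Step 1 holds uniformly in $t$. The set of $t$ for which $D^2u_t>0$ is open. To show it is closed, take a limit $t_0$. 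Then $D^2u_{t_0}\ge0$ and is positive near the boundary, so it is enough to prove that $D^2u_{t_0}$ cannot degenerate at an interior point.

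\emph{Step 3: constant rank theorem (main obstacle).} If $D^2u\ge0$ in $\Omega$, I claim the rank of $D^2u$ is constant. Since the rank is full near $\partial\Omega$, it is then full everywhere. To prove the claim, suppose the minimal rank $\ell<2n$ is attained at $x_0$. Take $\phi=\sigma_{\ell+1}(D^2u)+\sigma_{\ell+2}/\sigma_{\ell+1}$, or alternatively the smallest eigenvalue, and show that it satisfies a differential inequality
\[
L\phi:=u^{i\jbar}\phi_{i\jbar}\le C(\phi+|D\phi|)
\]
near $x_0$. The strong maximum principle then makes $\phi\equiv0$ on the connected domain, which contradicts positivity near the boundary.

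Deriving this inequality is where the work lies, and here the computations of \cite{Hu22Nov,HuCConvexity} are adapted. Differentiate $\log\det(u_{i\jbar})=(n+1)u$ twice in real directions $e_\alpha,e_\beta$:
\[
u^{i\jbar}u_{\alpha\beta i\jbar}=u^{i\ \bar q}u^{p\jbar}u_{\alpha p\bar q}u_{\beta i\jbar}+(n+1)u_{\alpha\beta}.
\]
The term $(n+1)u_{\alpha\beta}$ is harmless: it is linear in $D^2u$, so it is controlled by $\phi$ in the null directions. The difficulty is that the complex operator $L$ only sees the $J$-invariant part of the real Hessian, so the third-order terms $u_{\alpha p\bar q}$ must be organized to produce a nonpositive quadratic form. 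I would first try the auxiliary function $\log\lambda_{\min}(D^2u)$ in the viscosity sense, using the concavity of $\log\det$ on Hermitian matrices to absorb the bad terms. Handling the mixing of the real Hessian, which is non-Hermitian, with the complex structure is the step I expect to be hardest.
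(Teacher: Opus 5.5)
\textbf{There is a genuine gap in Step 3, which is where the whole theorem lives.} As planned, that step depends on an ingredient that is not currently available. The inequality $L\phi\le C(\phi+|D\phi|)$ for $\phi=\sigma_{\ell+1}+\sigma_{\ell+2}/\sigma_{\ell+1}$ is exactly what the constant rank theorems of Caffarelli--Guan--Ma, Bian--Guan and Sz\'ekelyhidi--Weinkove deliver. Those theorems require the inverse-convexity condition: $H\mapsto F(H^{-1})$ must be locally convex, where $F(H)=\det(u_{i\bar j})$ is viewed as a function of the real Hessian $H$. The paper states explicitly that this is not known.

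Your proposed workaround points the wrong way. In your own differentiated equation,
\[
u^{i\bar j}\partial_{i\bar j}u_{\alpha\alpha}=u^{i\bar q}u^{p\bar j}u_{\alpha p\bar q}u_{\alpha i\bar j}+(n+1)u_{\alpha\alpha},
\]
the third-order term is $\ge 0$ precisely because $\log\det$ is concave. So $u_{\alpha\alpha}$ is a subsolution of $L-(n+1)$. At an interior minimum of $\lambda_{\min}$, take $\alpha$ to be a bottom eigenvector; then $u_{\alpha\alpha}$ also has a local minimum there, and you would need $Lu_{\alpha\alpha}\le(n+1)u_{\alpha\alpha}$. Concavity gives the opposite inequality, so it is the source of the bad term, not a cure. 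In the real theory, the eigenvalue-perturbation terms together with inverse convexity compensate for it, and nothing in your sketch replaces that. Without Step 3, the closedness in Step 2 also fails.

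\textbf{The missing idea.} Stop working with eigenvalues of the real Hessian and use a Hermitian matrix adapted to the complex structure.
\begin{itemize}
\item \emph{Reduction.} By the paper's linear algebra lemma, since $A=(u_{i\bar j})>0$, one has $D^2u>0$ if and only if the Schur complement $M=A-B\,\overline{A^{-1}}\,\overline{B}$, with $B=(u_{ij})$, is positive definite.
\item \emph{Exact identity.} Differentiate $\log\det A=(n+1)u$ once and twice, and regroup the summation indices in two different ways. This gives matrix identities for $u^{i\bar j}\partial_{i\bar j}A$ and $u^{i\bar j}\partial_{i\bar j}B$. With $\mathscr{B}^{(i)}=\partial_iB-B\,\overline{A^{-1}}\,\partial_i\overline{A}$, these combine into
\[
u^{i\bar j}\partial_{i\bar j}M-(n+1)M=-u^{i\bar j}\,\mathscr{B}^{(i)}\,\overline{A^{-1}}\,\bigl(\mathscr{B}^{(j)}\bigr)^{*}\le 0 .
\]
Here the subtracted term $B\,\overline{A^{-1}}\,\overline{B}$ cancels exactly the positive third-order term that defeats your approach.
\item \emph{Maximum principle.} For each constant vector $s$, the function $m=M_{p\bar q}s^p\overline{s^q}$ satisfies $u^{i\bar j}m_{i\bar j}\le(n+1)m$. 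It is positive in a boundary collar by your Step 1 together with the lemma. The minimum principle then gives $m>0$ throughout $\Omega$.
\end{itemize}
This inequality holds with no a priori sign information on $M$, so neither the deformation of Step 2 nor any constant rank theorem is needed.

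A minor correction: on the ball the solution is exactly $-\log(1-|z|^2)$. It is not determined only up to an additive constant, because the Monge--Amp\`ere equation here is not invariant under adding constants.
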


In (\ref{eq:potential_KE}), if the complex Monge-Amp\`ere operator is replaced by the Laplacian operator, the corresponding result is a consquence of the constant rank theorem of \cite{CaffarelliDuke1985} and \cite{Korevaar_Lewis}.  

Constant rank theorems for nonlinear equations were proved by \cite{Caffarelli_Guan_Ma_constantrank}, \cite{BianGuan09_Invention} and \cite{Garbo_Weinkove_ConstantRank}. In \cite{GuanPhong12MaxRank}, a constant rank theorem for degenerate complex Monge-Amp\`ere equation was proved in the case of low dimension. The general dimensional case remained as a conjecture until it was proved by the first author of this paper in \cite{Hu22Nov}.  In \cite{LQun_Complex}, a constant rank theorem regarding complex Hessian was proved.

Readers may wonder if the result of this paper could be proved by directly applying the existing constant rank theorems (for example, Theorem 1 of \cite{Caffarelli_Guan_Ma_constantrank}). To use the existing constant rank theorems we need to 
verify some structure conditions, in particular, an ``inverse convexity" condition. Let $u$ be a strictly convex function on $\EC^n$; let $H$ be the real Hessian of $u$, and $A$ be the complex Hessian of $u$.  Denote the map 
\begin{align}
H\ \rightarrow \det{A}
\end{align} by $F$. Constant rank theorems of  \cite{Caffarelli_Guan_Ma_constantrank}, \cite{BianGuan09_Invention} and \cite{Garbo_Weinkove_ConstantRank} require that
\begin{condition}[Inverse Convexity Condition]
	\begin{align}F(H^{-1}) \text{  \   is a locally convex function of $H$.}
	\end{align}
	\end{condition} We believe that the above condition holds and that $\log(F(H^{-1}) )$ is a locally convex function of 
	$H$; however, we do not yet have a proof for this.
	
	In a couple of works \cite{Hu22Nov} and \cite{HuCConvexity}, we developed a computation technique which is particularly useful and convenient for the convexity estimate of solutions to complex Monge-Amp\`ere equations.  But in the previous works, we only used this method for homogenous complex Monge-Amp\`ere equation. We later discovered, the method can be applied to more general complex Monge-Amp\`ere equaions. 
	
	Besides the convexity of the solution, there are many researches on the convexity of level sets of the solution or power(or logorithmic and exponential) convexity of the solutions \cite{BianGuanXuMa} \cite{GuanXu_convexity} \cite{Xinan_Ou_Summary_English} \cite{MXu_PowerConvexity} \cite{XinanMa_Chinese} \cite{Shishujun_Green_LevelSets}. Our computation is useful in these settings too. In an upcomming paper \cite{ChenHongyu}, we prove the power convexity of solutions to non-degenerate complex Monge-Amp\`ere equations in a convex domain.

\section{Proof of the Main Theorem}
   Let $A=(u_{\ijbar})$,  $B=(u_{ij})$ and
   \begin{align}
   	M=A-B\Abarinverse \ \Bbar.
   \end{align}
   According to the estimate of \cite{ChengYau_80_KahlerMetric}, we already know that the minimum eigenvalue of  $A$ has a lower bound. So using Lemma \ref{lemma} we only need to show that $M$ is positive definite in $\Omega$. 
   
   In the following, we show $u^{\ijbar}{\partial_{\ijbar}} M-(n+1)M$ is non-positive definite as a matrix, then using maximum principle we derive $M>0$ in $\Omega$.  In subsection \ref{sec:eq_for_AB}, we derive equations for $A, B$ to simplify the computation.  
   In subsection \ref{sec:computing_LM} we compute $u^{\ijbar}{\partial_{\ijbar}} M$. In subsection \ref{sec:Proving_Result_Maximum_Principle}, utalizing the results above, we prove the main theorem.
\subsection{Equations for Second Derivatives of $u$: $A, B$}
\label{sec:eq_for_AB}
The logrithm of equation (\ref{eq:potential_KE}) is
\begin{align}
	\log\det(u_{\ijbar})=(n+1)u
	\label{eq:2.2}
\end{align}
We apply $\partial_p$ to the equation above and get
\begin{align}
	u^{\ijbar}u_{p\ijbar}=(n+1)u_p.
	\label{eq:2.3}
\end{align}
Then we apply $\partial_{\qbar}$ to (\ref{eq:2.3}) and get
 \begin{align}
 	u^{\ijbar}{u_{p\qbar\ijbar}}=u^{i\overline t}u_{s\overline t \qbar} u^{s\jbar} u_{p\ijbar}+(n+1) u_{p\qbar}.
 	\label{eq:2.4}
 \end{align}
 On the right-hand side of (\ref{eq:2.4}), we switch indices $i$ and $s$ and get
  \begin{align}
 	u^{\ijbar}{u_{p\qbar\ijbar}}=u^{s\tbar}u_{i\tbar\qbar}u^{\ijbar} u_{ps\jbar}+(n+1) u_{p\qbar}.
 	\label{eq:2.4+}
 \end{align}  Using the notation of matrix multiplication, we can simplify (\ref{eq:2.4+}) as
 \begin{align}
 	u^{\ijbar} \partial_{\ijbar}A=u^{\ijbar}\partialjbar B \Abarinverse \partiali\overline{B}+(n+1)A.
 	\label{eq:DDA_BB}
 \end{align}
 Similarly switching indices  $j$  and $t$ in the  right-hand side of (\ref{eq:2.4}) gives
  \begin{align}
 	u^{\ijbar}{u_{p\qbar\ijbar}}=u^{i\overline j}u_{s\overline j \qbar} u^{s\tbar} u_{p i\tbar}+(n+1) u_{p\qbar}.
 	\label{eq:2.4++}
 \end{align} The above can be simplified to the following, using the notation of matrix multiplication:
 \begin{align}
 	u^{\ijbar} \partial_{\ijbar}A=u^{\ijbar}\partiali A \Ainverse \partialjbar A+(n+1)A.
 \label{eq:DDA_AA}
 \end{align}
Applying $\partial_q$ to (\ref{eq:2.3}) gives
 \begin{align}
	u^{\ijbar}{u_{pq\ijbar}}=u^{i\overline t}u_{s\overline t q} u^{s\jbar} u_{p\ijbar}+(n+1) u_{pq}.
	\label{eq:2.4_pq_nobar}
\end{align}
Similar to the treatment of (\ref{eq:2.4}), we can simplify it using matrix multiplication: (\ref{eq:2.4_pq_nobar}) is equivalent to 
\begin{align}
	 	u^{\ijbar} \partial_{\ijbar}B=u^{\ijbar}\partiali A \Ainverse \partialjbar B+(n+1)B,
	\label{eq:DDB_AB}
\end{align} and
\begin{align}
	    u^{\ijbar} \partial_{\ijbar}B=u^{\ijbar}\partialjbar B \Abarinverse \partiali \Abar+(n+1)B.
	\label{eq:DDB_BA}
\end{align}
\subsection{Computing $u^{\ijbar}{\partial_{\ijbar}} M$}
\label{sec:computing_LM}
We want to apply $u^{\ijbar}\partial_{\ijbar}$ to the matrix $M$. To simplify the computation, let
\begin{align}
	\BBi=\partial_iB -B\Abarinverse \partial_i\Abar.
\end{align}
Then $\BBi$ satisfies
\begin{align}
	\partial_i(B\Abarinverse)=\BBi\Abarinverse.
\end{align}
The  conjugate transpose of the relation above is 
\begin{align}
	\partial_\jbar(\Abarinverse\ \Bbar)=\Abarinverse\left(\BBj\right)^\ast.
\end{align} Here we replace the index $i$ by $j$.

Then direct computation gives
\begin{align}
	&u^{\ijbar}\partial_{\ijbar}(A-B\Abarinverse\ \Bbar)\\
	=&u^{\ijbar}\partialjbar\left(\partiali A-\BBi\Abarinverse\ \Bbar-B\Abarinverse\partiali\Bbar\right)\\
	=&\uijbar A_{\ijbar}-\uijbar\partialjbar\BBi\Abarinverse\ \overline{B}-\uijbar\BBi\Abarinverse\BBjbar
	   -\uijbar \partial_\jbar B\Abarinverse\partial_i\Bbar-\uijbar B\partial_\jbar\left(\Abarinverse \partial_i\Bbar\right).
	\label{eq:4.3Lastline}
\end{align}
In (\ref{eq:4.3Lastline}), the third term  is a non-positive definite matrix. 
For the first term and the forth term, we combine them together and use relation (\ref{eq:DDA_BB}):
\begin{align}
	u^{\ijbar}\left(A_{\ijbar}-\partialjbar B\Abarinverse \partiali\Bbar\right)=(n+1)A.
	\label{eq:partial_Res_1}
\end{align}

In (\ref{eq:4.3Lastline}), it remains to study the second term and the last term. In the following we compute them seperately.

{\bf(i) Computing $\partialjbar\BBi$}
\begin{align}
  &\uijbar\partialjbar\BBi
  \label{eq:partialjbarBBi}\\
=&\uijbar\partialjbar\left( \partiali B-B\Abarinverse\partiali\Abar\right)\\
=&
\left(\partialijbar B-\Bjbar\Abarinverse\partiali \Abar\right)\uijbar
-B\uijbar\partialjbar \left( \Abarinverse\partiali\Abar\right)
\label{eq:2.20}
\end{align}

For the second term of (\ref{eq:2.20}), we compute $\uijbar\partialjbar \left( \Abarinverse\partiali\Abar\right)$ and then use the complex conjugate of the relation (\ref{eq:DDA_AA}). Direct computation gives
\begin{align}
  \uijbar	\partialjbar \left( \Abarinverse\partiali\Abar\right)
=\Abarinverse\uijbar\left( \partialijbar \Abar-\partialjbar\Abar\ \Abarinverse\partiali\Abar\right)
\end{align}
Then using the complex conjugate of the relation (\ref{eq:DDA_AA}), we know the above equals to $(n+1)I$.

For the first term of (\ref{eq:2.20}), we use the relation (\ref{eq:DDB_BA}). Then we find that  in (\ref{eq:2.20}) the first term and the second term both equal to $(n+1)B$ so they cancel with each other:
\begin{align}
(\ref{eq:partialjbarBBi})=(n+1)B-(n+1)B=0.
\label{eq:partial_Res_2}
\end{align}
{\bf(ii) Computing $\partial_\jbar\left(\Abarinverse \partial_i\Bbar\right)$}

Direct computation gives
\begin{align}
	\partial_\jbar\left(\Abarinverse \partial_i\Bbar\right)
	=\Abarinverse \left(\partialijbar \Bbar -\partial_\jbar\Abar\ \Abarinverse\partiali\Bbar\right).
	\label{eq:partialjbarAinversbarpartialiBbar}
\end{align}
Then using the complex conjugate of the relation (\ref{eq:DDB_AB}), we find
\begin{align}
	(\ref{eq:partialjbarAinversbarpartialiBbar})=(n+1)\Abarinverse \ \Bbar.
	\label{eq:partial_Res_3}
\end{align}
Combining (\ref{eq:partial_Res_1}), (\ref{eq:partial_Res_2}) and (\ref{eq:partial_Res_3}) all together, we find
\begin{align}
\uijbar	\partialijbar M-(n+1)M=-\BBi\Abarinverse\BBjbar
\end{align}
is a non-positive definite matrix.
\subsection{Proving the Result Using the Maximum Principle}
\label{sec:Proving_Result_Maximum_Principle}
Let $\bfs=s^i \partial_i$ be a constant vector field and 
\begin{align}
	m=M_{p\qbar}s^p s^{\qbar}.
\end{align}
The result of the previous section implies
\begin{align}
	\uijbar\partialijbar m\leq (n+1)m.
\end{align}
Corollary 6.6 of \cite{ChengYau_80_KahlerMetric} says that $v\in C^2(\overline{\Omega})$. Then equation (\ref{eq:potential_KE_exp}) implies $\nabla v\neq 0$  on $\partial\Omega$ since $v=0$ on $\partial\Omega$. Because $v=0$ on $\partial\Omega$, $\Omega$ is strictly convex and $\nabla v\neq0$ in a neighborhood of $\partial\Omega$, the level sets of $v$ are strictly convex in a neighborhood of $\partial\Omega$. From this, we can prove that in a neighborhood of $\partial \Omega$ $u$ is convex. Since $u=-\log(v)$ direct computation gives
\begin{align}
  D^2_{\ER}u=\frac{1}{v}	\left(-D^2_{\ER}v+\frac{1}{v}\nabla v\otimes\nabla v\right).
\end{align}
Because $-D^2_{\ER}v$ is positive definite in the tangential direction of level sets adding $\frac{1}{v}\nabla v\otimes\nabla v$ makes $-D^2_{\ER}v+\frac{1}{v}\nabla v\otimes\nabla $ positive definite if $v$ is small enough. Therefore for $r$ small enough
\begin{align}
	D_{\ER}^2u   \text{\ is positive definite  in \ \ \ \ } \Omega_r=\left\{
	z\in \Omega\ |\ 0<\text{dist}(z,\partial \Omega)<r
	\right\}.
\end{align} 
Then Lemma \ref{lemma} implies 
\begin{align}
	M   \text{\ is positive definite  in \ \ \ \ } \Omega_r=\left\{
	z\in \Omega\ |\ 0<\text{dist}(z,\partial \Omega)<r
	\right\}.
\end{align} 
So for any vector field $\bfs$ the corresponding $m>0$ in $\Omega_r$. Therefore the maximum principle (Corollary 3.2 of \cite{GT}) implies $m>0$ in $\Omega$. It follows that $M$ is positive definite in $\Omega$. Again using Lemma \ref{lemma}, we conclude that  $u$ is strictly convex in $\Omega$.

\appendix
\section{Linear Algebra Lemma}
In this paper, we need to estimate the real convexity of a function $u$ on $\EC^n$ using complex derivatives $u_{\ijbar}$, $u_{ij}$. The following lemma provides a tool for switching between complex and real contexts. The proof is almost the same as that of Lemma A6 of \cite{Hu22Nov}.

\begin{lemma}
	\label{lemma}
	 $u$ is a $C^2$ function on $\EC^n$, with coordinates $z_i=x_i+\sqrt{-1}y_i$, $i=1,2,\ ...\ ,n$. Denote the real Hessian of $u$ by 
	  \begin{align}
	  	  H=
	  	  \left(\begin{array}{cc}  U&V\\
	  	  V^T& W
	  	  \end{array}
	  	  \right)
	  	  =
	  	  \left(\begin{array}{cc}  u_{x_i x_j}& u_{x_i y_j}\\
	  	  	                                         u_{y_i x_j}& u_{y_i y_j}
	  	  	          \end{array}
	  	  \right).
	  \end{align}
	  Denote that 
	  \begin{align}
	  	A=\left(
	  	   u_{\ijbar}
	  	\right), \ \ \ \ \ B=\left(
	  	u_{ij}
	  	\right).
	  \end{align}
	  Then $u$ is strictly convex (i.e. $H>0$) if and only if $A>0$ and $A-B\overline{A^{-1}}\ \overline B>0$. In above, a matrix $>0$ means that it's positive definite.
\end{lemma}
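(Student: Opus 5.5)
The plan is to express the real quadratic form $H$ in complex coordinates and then take a Schur complement. For a real tangent vector $\xi=(a,b)\in\ER^{2n}$, set $w=a+\sqrt{-1}b\in\EC^n$. Writing $\partial_{x_i}=\partial_i+\partial_{\bar i}$ and $\partial_{y_i}=\sqrt{-1}(\partial_i-\partial_{\bar i})$, a direct expansion gives
\begin{align}
\xi^T H\xi = 2\,\mathrm{Re}\left(w^TBw\right)+2\,w^TA\overline{w}
= \left(\begin{array}{cc} w^T & \overline{w}^T\end{array}\right)
\left(\begin{array}{cc} A & B\\ \overline{B} & \overline{A}\end{array}\right)
\left(\begin{array}{c}\overline{w}\\ w\end{array}\right),
\end{align}
where $A$ is Hermitian and $B$ is symmetric. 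Here the convention is $A=(u_{i\jbar})$, so $w^TA\overline w=\sum u_{i\jbar}w_i\overline{w_j}$. I will fix the constants and conventions carefully in this step, since the factor $2$ and which side carries the conjugate are easy to get wrong.

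Next, let $\mathcal{H}=\left(\begin{array}{cc} A & B\\ \overline{B} & \overline{A}\end{array}\right)$. This is a $2n\times 2n$ Hermitian matrix, because $A^\ast=A$ and $B^\ast=\overline{B}^T=\overline{B}$. The map $\xi\mapsto\zeta=(\overline{w},w)$ is an $\ER$-linear isomorphism from $\ER^{2n}$ onto the real subspace $\{(\overline w,w)\}$. That subspace is a totally real form of $\EC^{2n}$, i.e.\ its complex span is all of $\EC^{2n}$. So $H>0$ means exactly that $\zeta^\ast\mathcal{H}'\zeta>0$ on this real form, where $\mathcal{H}'$ is $\mathcal{H}$ up to a fixed permutation of the two blocks.

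The key point is that positivity on the real form is equivalent to positivity of $\mathcal{H}$ on all of $\EC^{2n}$. The reason is the conjugation symmetry $J\overline{\mathcal{H}}J=\mathcal{H}$, where $J$ swaps the two blocks. Concretely, write $\mathcal{H}=P^\ast \tilde H P$, where $P=\frac{1}{\sqrt2}\left(\begin{array}{cc} I & \sqrt{-1}I\\ I & -\sqrt{-1}I\end{array}\right)$ (or similar) is unitary up to scale. Here $\tilde H$ is a fixed constant multiple of $H$ viewed as a complex Hermitian matrix. A real symmetric matrix is positive definite on $\EC^{2n}$ iff it is positive definite on $\ER^{2n}$. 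Hence $H>0\iff\mathcal{H}>0$. I expect this identification of $\mathcal{H}$ as a unitary conjugate of a multiple of $H$ to be the main point needing care; I will verify it by checking $P$ directly against the formula above.

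Finally, apply the Schur complement criterion to $\mathcal{H}$. We have $\mathcal{H}>0$ iff $\overline{A}>0$ and $A-B\,\overline{A}^{-1}\,\overline{B}>0$. Since $\overline{A}>0\iff A>0$ and $\overline{A}^{-1}=\Abarinverse$, this is exactly the statement. For the equivalence to hold in both directions, note that if $\mathcal{H}>0$ then its diagonal block $\overline A$ is positive, so the Schur complement is defined. Conversely, if $A>0$ and the Schur complement is positive, the block $LDL^\ast$ factorization shows $\mathcal{H}>0$.
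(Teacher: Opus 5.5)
Your proposal is correct and is essentially the paper's proof. The paper also writes $Q=\left(\begin{array}{cc} A & B\\ \overline{B} & \overline{A}\end{array}\right)$ as a congruence of $H$ through $T=\left(\begin{array}{cc} I & -\sqrt{-1}I\\ I & \sqrt{-1}I\end{array}\right)$, which is exactly your map $(a,b)\mapsto(\overline{w},w)$ (precisely $THT^\ast=4Q$), and then applies the same Schur-complement elimination against $\overline{A}$.

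For the check you flagged, the right choice is $P=\frac{1}{\sqrt2}T^\ast=\frac{1}{\sqrt2}\left(\begin{array}{cc} I & I\\ \sqrt{-1}I & -\sqrt{-1}I\end{array}\right)$ with $\tilde H=\frac12 H$. The matrix you wrote does not work with the $(x,y)$ block ordering. Also, no block permutation is needed, since $\xi^TH\xi=\zeta^\ast\mathcal{H}\zeta$ holds exactly for $\zeta=(\overline{w},w)$.
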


\begin{proof}
	We only need to show that at a given point $H>0$  if and only if $A>0$ and $A-B\overline{A^{-1}}\ \overline B>0$. This becomes  obvious once we derive the relation between $H$ and $A, B$.
	
	Direct computation gives
	\begin{align}
		u_{\ijbar}=\frac{1}{4}\left(u_{x_ix_j}+u_{y_iy_j}+\sqrt{-1}(u_{x_iy_j}-u_{y_ix_j})\right);\\
			u_{ij}=\frac{1}{4}\left(u_{x_ix_j}-u_{y_iy_j}-\sqrt{-1}(u_{x_iy_j}+u_{y_ix_j})\right).
	\end{align}Using the matrix notation, we have
	\begin{align}
		\label{matrix_Relation_A}
		A=\frac{1}{4}(U+W+\sqrt{-1}(V-V'));\\
				\label{matrix_Relation_B}
		B=\frac{1}{4}(U-W-\sqrt{-1}(V+V')).
	\end{align}
	  Let
	  \begin{align}
	  	Q=\left(
	  	\begin{array}{cc}
	  	A	&  B    \\
	  	\overline{B}	&\overline{A}
	  	\end{array}
	  	\right).
	  \end{align}
	  $Q=Q^\ast$ since $B$ is symmetric. 
	  Then (\ref{matrix_Relation_A}) and (\ref{matrix_Relation_B}) gives
	  \begin{align}
	  	\label{QH}
	  	Q=\left(
	  	\begin{array}{cc}
	  	I	&  -\sqrt{-1}I   \\
	  	I	&   \sqrt{-1}I
	  	\end{array}
	  	\right)  H
	  	\left(
	  	\begin{array}{cc}
	  		I	& I   \\
	  	 \sqrt{-1}	I	&  - \sqrt{-1}I
	  	\end{array}
	  	\right).
	  \end{align}
	  This implies that $H>0$ if and only if $Q>0$. When $A$ is invertible, we can transform $Q$ so that the relation between $Q$ and $A-B\overline{A^{-1}}\ \overline{B}$ is more obvious. We have
	  \begin{align}
	  	\left(\begin{array}{cc}
	  		A-B\overline{A^{-1}}\overline{B}& 0\\
	  		0& \overline{A}
	  	\end{array}\right)
	  	=\left(\begin{array}{cc}
	  		I& -B\overline{A^{-1}}\\
	  		0& I
	  	\end{array}\right)
	  	Q
	  	\left(\begin{array}{cc}
	  		I& 0\\
	  		-\overline{A^{-1}}B^\ast& I
	  	\end{array}\right).
	  	\label{Qregularize}
	  \end{align}
	  If $A$ and $A-B\overline{A^{-1}}\overline{B}$ are both positive definite, using (\ref{Qregularize}) we know $Q$ is positive definite. If $Q$ is positive definite, it follows that 
	  $A$, being a principal submatrix of 
	 $Q$, is also positive definite.Then equation (\ref{Qregularize}) implies that 
	  $A-B\overline{A^{-1}}\overline{B}$ is positive definite.
\end{proof}

\ \ Jingchen Hu\\
Sichuan University\\jingchenhu@scu.edu.cn\\

Li Sheng\\
Sichuan University\\ lsheng@scu.edu.cn

\begin{thebibliography}{99}


	\bibitem[BG09]{BianGuan09_Invention}B. Bian, P. Guan, {\em A microscopic convexity principle for nonlinear partial differential
	equations.} Invent. Math. 177 (2009), 307–335.
	\bibitem[BGMX11]{BianGuanXuMa} B. Bian, P. Guan, L. Xu, X. Ma, {\em A Constant Rank Theorem for Quasiconcave Solutions of Fully Nonlinear Partial Differential Equations.} Indiana University Mathematics Journal, Vol. 60, No. 1 (2011), pp. 101-119.
	
	

\bibitem[C85]{CaffarelliDuke1985} L, Caffarelli, A. Friedman. {\em Convexity of Solutions of Semilinear Elliptic Equations. Duke Mathematical Journal}, 1985, Vol.52(2), 431–456.
\bibitem[CGM07]{Caffarelli_Guan_Ma_constantrank}L.Caffarelli, P. Guan, X. Ma, {\em A Constant Rank Theorem for Solutions of Fully Nonlinear Elliptic Equations. } Communications on Pure and Applied Mathematics, 2007, Vol. 60(12), 1769–1791.
\bibitem[CM18]{ChuanQiangMa} C. Chen, X. Ma, {\em The microscopic convexity of level sets of solutions for elliptic
	and parabolic equations}. Scientia Sinica Mathematica, Volume 48, Issue 10: 1205 (2018). (in Chinese)

\bibitem[CY80]{ChengYau_80_KahlerMetric} S.Y. Cheng, S.T. Yau, {\em On the Existence of a Complete K\"ahler Metric on Non-Compact Complex Manifold and the Regularity of Fefferman's Equation,} CPAM, Vol. XXXIII, 507-544(1980)

\bibitem[CHS26]{ChenHongyu}H.Chen, J.Hu, L.Sheng, {\em Power Convexity of Solutions to Complex Monge-Amp\`ere Equation} (to appear)


\bibitem[F76]{Fefferman1976Ann}C. L. Fefferman, {\em Monge-Ampère equations, the Bergman kernel, and geometry of pseudoconvex domains}. 
Ann. of Math. (2), 103(2):395–416, 1976.



\bibitem[GT01]{GT} D. Gilbarg, N. Trudinger, {\em Elliptic Partial Differential Equations of Second Order,} Classics in Mathematics, Springer-Verlag, 2001. 
	\bibitem[GP12]{GuanPhong12MaxRank} P. Guan, D. H. Phong, {\em A Maximum Rank Problem for Degenerate Elliptic Fully Nonlinear Equations}. Math. Ann. {354} (2012), no. 1, 147--169.
	\bibitem[GX13]{GuanXu_convexity} 
	P. Guan, L. Xu, {\em Convexity estimates for level sets of quasiconcave solutions to fully nonlinear elliptic equations}. Journal für die reine und angewandte Mathematik (Crelles Journal), vol. 2013, no. 680, 2013, pp. 41-67.
	
	\bibitem[HJ23]{HanJiang_MinimalSurface}  Q. Han, X. Jiang, {\em Boundary Regularity for Minimal Graphs in the Hyperbolic Space}. Journal für die reine und angewandte Mathematik (Crelles Journal) 2023 (2014): 239 - 272.

\bibitem[HJ24]{Han_Jiang_Expansion} Q. Han, X. Jiang, {\em Asymptotics and Convergence for the Complex Monge-Amp\`ere Equation,} Annals of PDE(2024) 10:8.
Nov. 2011, Adv. Math. Vol. 230, Iss. 4–6, Jul.–Aug. 2012, Pages 2428-2456.

\bibitem[HJ13]{HornMatrix} R. Horn, C. Johnson, Matrix Analysis, Cambridge Univ. Press, Cambridge, Second Edition 2013.

	\bibitem[H21]{HuIMRN} J. Hu, {\em An Obstacle for Higher Regularity of Geodesics in the Space of K\"ahler Potentials.} International Mathematics Research Notices, Volume 2021, Issue 15, August 2021, Pages 11493–11513.
	\bibitem[H24A]{HuC2Perturb} J. Hu, {\em A Metric Lower Bound Estimate for  Geodesics in the Space of K\"ahler Potentials.} The Journal of Geometric Analysis (2024) 34:225.
	\bibitem[H24B]{HuCConvexity}J. Hu, {\em A Maximum Rank Theorem for Solutions to the Homogenous Complex Monge–Amp\`ere Equation in a $\EC$-Convex Ring.} Calc.Var.(2024) 63:166.
	\bibitem[H25]{Hu22Nov}J. Hu, {\em The Preservation of Convexity by Geodesics in the Space of K\"ahler Potentials on Complex Affine Manifolds}. 	Math. Annalen (2025) 393:1635–1681.


\bibitem[K87]{Korevaar_Lewis} N.J.Korevaar, J.Lewis, {\em Convex 
	Solutions of Certain Elliptic Equations Have Constant Rank Hessians}. Arch. Rational Mech. Anal. 97, 19–32 (1987)



 
\bibitem[LM82]{Lee_Melrose_Acta} J. M. Lee and R. Melrose, Boundary Behaviour of the Complex Monge-Amp\`ere Equation, Acta Math., 148 (1982), 159–192.
 \bibitem[L09]{LQun_Complex} Li, Q,{\em Constant rank theorem in complex variables}, Indiana Univ. Math. J. 58 (2009),
 no. 3, 1235–1256.
 
 

   \bibitem[M16]{XinanMa_Chinese} X. Ma, {\em The Convexity of the Solution of Elliptic and Parabolic Partial Differential Equations}, College Mathematics. Vol. 32, No. 5, Oct. 2016, 1-17. (in Chinese)
   \bibitem[MO10]{Xinan_Ou_Summary_English}X.Ma, Q. Ou, {\em The Convexity of Level Sets for Solutions
   	to Partial Differential Equations.} Trends in
   Partial Differential Equations,
   Advanced Lectures in Mathematics
   Volume X, pp. 295–322. International Press of Boston, Inc. 2010.

\bibitem[MX08]{MXu_PowerConvexity} X. Ma, L. Xu, {\em The Convexity of Solution of a class Hessian Equation	in Bounded Convex Domain in $\ER^3$.} Journal of Functional Analysis 255 (2008) 1713–1723.
   
   \bibitem[S15]{Shishujun_Green_LevelSets}S. Shi, {\em Convexity Estimates for the Green’s Function}. Calc. Var., 53, 675–688 (2015).


   \bibitem[SW16]{Garbo_Weinkove_ConstantRank} G. Sz\'ekelyhidi, B. Weinkove,  {\em On a Constant Rank Theorem for Nonlinear Elliptic PDEs}. Discrete Contin. Dyn. Syst. { 36} (2016), no. 11, 6523--6532.
   

\end{thebibliography}
\end{document}